\documentclass[11pt]{amsproc}
\usepackage{graphicx, amssymb,amsmath, latexsym,cite}
\usepackage{amsthm, xcolor}
\usepackage{lscape}  
\usepackage{float}
\usepackage{fancyhdr, lastpage} 
\usepackage{times,url}    
\usepackage{color,multicol}

 \usepackage[a4paper,right=2.4cm, left=2.4cm, top=2.3cm, bottom=2.3cm, marginpar=1.6cm]{geometry}  

\newtheoremstyle{mythm}                   
{7pt}
{0pt}
{\it}
{}
{\bf}
{.}
{.5em}
{}

\newtheoremstyle{mydef}                   
{6pt}
{6pt}
{}
{}
{\bf}
{.}
{.5em}
{}

\newtheoremstyle{myrem}                   
{6pt}
{6pt}
{}
{}
{\bf}
{.}
{.5em}
{}

\theoremstyle{mythm}       

\newtheorem*{theorem*}{Theorem}

\theoremstyle{mydef}

\theoremstyle{myrem}

\numberwithin{equation}{section}

\newcommand{\M}{\mathbb{M}}

\newcounter{ithmcount}

\newcounter{itemscount}

\newcommand{\Sym}{\mathrm{Sym}}

\newcommand{\Dih}{\mathrm{Dih}}
\newcommand{\Co}{\mathrm{Co}}
\newcommand{\Gx}{G_{x0}}

\newcommand{\N}{N_{\M}}
\newcommand{\C}{C_{\M}}

\renewcommand{\leq}{\leqslant}

\allowdisplaybreaks

\begin{document}

\vspace*{-2.2cm}

\title{The Monster group is a completion of the Goldschmidt $G_3$-amalgam}

\begin{abstract}
  We prove the claim in the title and complete Parker and Rowley's classification (J.~Algebra 2001) of which sporadic simple  groups are completions of the Goldschmidt $G_3$-amalgam.
\end{abstract} 
 
\author[H.~Dietrich]{Heiko Dietrich}
\address{School of Mathematics, Monash University, Clayton, Australia}
\email{heiko.dietrich@monash.edu}
\date{\today}

\subjclass{20D08, 20-08}
\keywords{Monster group, Goldschmidt $G_3$-amalgam, sporadic simple groups}
\thanks{The author thanks L\'aszl\'o Pyber for raising this question and Martin Seysen for comments on the draft.}

\maketitle
  


A group $G$ is a completion of the Goldschmidt $G_3$-amalgam if it has subgroups $A,B\leq G$, both isomorphic to $\Sym_4$, that generate $G$  and satisfy $A\cap B\cong \Dih_8$ and $O_2(A)\ne O_2(B)$. The study of these (and other) amalgams stems from the influential work of  Goldschmidt \cite{goldschmidt} on automorphisms of trivalent graphs. We refer to the recent work of Rowley and  Vasey \cite{rowley} for background information and a survey of known results. Here we are concerned with the sporadic simple groups: Parker and Rowley  \cite{parker} proved that $15$ of the $26$ sporadic simple groups are completions of $G_3$-amalgams, and $10$ are not. The case of the Monster group $\M$ was left open; we settle it here.

\begin{theorem*}
The Monster is a completion of the Goldschmidt $G_3$-amalgam.
\end{theorem*}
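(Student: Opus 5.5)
The plan is to exhibit explicit subgroups $A,B\leq\M$, both isomorphic to $\Sym_4$, with $A\cap B\cong\Dih_8$ and $O_2(A)\neq O_2(B)$, and then to certify that $\langle A,B\rangle=\M$. Seysen's computational construction of $\M$ (the mmgroup package) gives an efficient representation, so all group-theoretic checks can be carried out by machine.

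\textbf{Step 1: choose the common $\Dih_8$.} I would fix $D\cong\Dih_8$ together with its two Klein four-subgroups $V_1,V_2$; these will be $O_2(A)$ and $O_2(B)$. In $\Sym_4$, $O_2=V$ is normal and $\Sym_4/V\cong\Sym_3$. So I need elements $a,b$ of order $3$ such that
\begin{itemize}
\item $a$ normalises $V_1$ and $A=\langle D,a\rangle\cong\Sym_4$, with a suitable $\Dih_8$ inside realised by $D$;
\item $b$ normalises $V_2$ and $B=\langle D,b\rangle\cong\Sym_4$.
\end{itemize}
To get $\Sym_4$ and not a larger group, I would require $a$ to act on $V_1$ as an element of order $3$ and to be inverted by an element of $D\setminus V_1$; then $\langle V_1,a,D\rangle=V_1\rtimes\Sym_3\cong\Sym_4$. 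To find such $a$, I would work inside $N_{\M}(V_1)$. A natural choice is for $V_1$ and $V_2$ to be $2\mathrm{B}$-pure, or of a class whose normaliser has a known structure from the ATLAS/Wilson's list, such as $2^2.2^{2+22}.(\Sym_3\times M_{22}{:}2)$ for $2\mathrm{B}$-pure fours-groups. In practice I would instead pick random elements: search for $D$ and random conjugates of order-$3$ elements in the appropriate normalisers, using the mmgroup implementation of $\M$, in which $G_{x0}=2^{1+24}.\Co_1$ is explicit.

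\textbf{Step 2: verify the amalgam conditions.} This consists of checking relations on $A$ and $B$ by computing element orders and the defining relations of $\Sym_4$, and checking $A\cap B=D$. The intersection condition follows because $A\neq B$ (their $O_2$'s differ, since $V_1\neq V_2$) and $D$ is a maximal subgroup of both.

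\textbf{Step 3: generation.} I would show that $H=\langle A,B\rangle=\M$ via the maximal-subgroup list of $\M$, which is now complete. I would find words in $H$ whose orders, such as $41$, $59$ and $71$, rule out every maximal subgroup. Only $\mathrm{L}_2(71)$ and $\mathrm{L}_2(59)$ contain elements of orders $71$ and $59$ respectively, and neither contains both. Order-$41$ elements lie only in $3^8.O_8^-(3).2_3$ and $(\mathrm{L}_2(41))$, so exhibiting elements of orders $59$ and $71$ already suffices.

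\textbf{Main obstacle.} Steps 2 and 3 are routine once the machine representation is available. I expect the main obstacle to be Step 1: finding a compatible pair of $\Sym_4$'s sharing a $\Dih_8$ whose generated group is not trapped in a proper subgroup such as $2^{1+24}.\Co_1$ or a $2$-local subgroup. Parker and Rowley's obstruction arguments for other groups suggest that the $2$-local structure matters, so I would try several class choices for the involutions and run random searches over conjugating elements in the centraliser of $D$, then test generation.
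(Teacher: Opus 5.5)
Your proposal follows essentially the same route as the paper. Both find two copies of $\Sym_4$ sharing a $\Dih_8$ by random search in \texttt{mmgroup} and check the $\Sym_4$ relations via element orders. Both get $A\cap B=D$ from $D\leq A\cap B$ and the maximality of $D$ in $\Sym_4$, and both certify $\langle A,B\rangle=\M$ by element orders against the maximal-subgroup list. For that last step the paper uses orders $71$ and $47$ rather than your $71$ and $59$; both choices work, since neither $47$ nor $59$ divides $|\mathrm{L}_2(71)|$.

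What the paper supplies, and what your plan necessarily leaves open, is the actual certificate: explicit words for $a_1,a_2,b_1,b_2$ and for the elements $u,v$ of large prime order. Until the search has produced these, the existence statement is not yet proved.
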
 
\begin{proof}
In  Figure \ref{fig:gens} we provide code for Seysen's Python software package \texttt{mmgroup} \cite{mmgroup,fast_monster} for computing in $\M$. Specifically, the code  defines generators for subgroups $A,B<\M$ and proves that $A\cap B\cong \Dih_8$ and  $A\cong B\cong \Sym_4$. The code also defines elements $u,v\in\langle A,B\rangle$  of orders $71$ and $47$, respectively. The classification of the maximal subgroups of $\M$ shows that, up to conjugacy, there is a unique maximal subgroup whose order is divisible by $71$, namely $L={\rm PSL}_2(71)$, see \cite[Table 1]{dlp}. The size of $L$ is not divisible by $47$, so $\langle u,v\rangle=\M$. Thus, $\M=\langle A,B\rangle$, and $O_2(A)\ne O_2(B)$ by simplicity of $\M$.
\end{proof}
 
We briefly comment on the random search that led to our generators. The package \texttt{mmgroup} \cite{mmgroup} can compute particularly well in the maximal subgroup $\Gx = 2_+^{1+24}.\Co_1$ of $\M$, which is the centraliser of a $2{\rm B}$-involution. For this reason, we constructed the first $\Sym_4$ in $\Gx$, by first looking for random  involutions that generate a subgroup $\Sym_3$ in the quotient $\Co_1$, and then considering suitable preimages that generate a subgroup $A\cong \Sym_4$ in $\Gx$. This search yielded the generators $a_1$ and $a_2$ in Figure \ref{fig:gens}. We construct $D\cong \Dih_8$ as a specific Sylow $2$-subgroup of $A$. Note that $D$ has exactly two subgroups isomorphic to $C_2\times C_2$: one is the  $2$-core $O_2(A)<D$, the other, $W<D$, is not normal in $A$.

Next, we look for a second group $B\cong \Sym_4$ that satisfies $A\cap B=D$ and $\M=\langle A,B\rangle$. The latter forces $O_2(B)\ne O_2(A)$ since otherwise $O_2(A)$ would be normal in $\M$, which is not possible. Thus, we need $O_2(B)=W$. Any such $B$ lies in $\N(W)$ and defines a subgroup $\Sym_3$ in $\N(W)/W$; we  construct  $B$ as follows. Conjugation by an involution $i\in D\setminus W$ induces a transposition on the involutions in  $W$. Let $h\in \N(W)$ and define $w=h^{-1}h^i$, so that $w^i=w^{-1}$. If $h$ induces a transposition on the involutions in $W$ that is different from the one induced by $i$, then $w$ induces a $3$-cycle on $W$. We look for such elements $w$ until we find one such that $b_2=w^{|w|/3}$ satisfies $B=\langle D,b_2\rangle\cong \Sym_4$. How to find $h$? The three involutions $z_1,z_2,z_3$ in our group $W$ all lie in the $\M$-class $2{\rm B}$, and we can use \texttt{mmgroup} functionality to construct  elements $h$ that fix $z_1$ and swap $z_2$ and $z_3$:  the command \texttt{conjugate\_involution} yields $c\in\M$ such that $z_1^c$ is the central involution in $\Gx$, so $W^c=\{1,z_1^c,z_2^c,z_3^c\}\leq \Gx$. Then \texttt{conjugate\_involution\_G\_x0} yields $d,e\in\Gx$ that map $z_2^c$ and $z_3^c$ to the same standard class representative in $\Gx$, so that $z_2^{cd}=z_3^{ce}$; note that $d$ and $e$ necessarily centralise $z_1^c$. Now $h=cde^{-1}c^{-1}\in \M$ fixes $z_1$ and swaps $z_2=z_1z_3$ and $z_3=z_1z_2$. Further  elements with this property can be constructed by multiplying any such $h$ by random elements of $\C(W)=C_{\C(z_1)}(z_2)$,  obtained   by Bray's method \cite{bray}.   

We used the LLM Claude Opus to significantly improve the Python implementation for the random search described above (e.g., by using multiprocessing); with the final code,  a successful construction took about 5 minutes on a 2019 iMac with a 3GHz Intel i5 processor.

\begin{figure}[pt]
\scriptsize
\begin{verbatim}
from mmgroup import MM

## defining the generators of the two subgroups Sym_4:
a1 = MM("M<y_75dh*x_18b7h*d_628h*p_56998179*l_2*p_2956800*l_1*p_86279601*l_2*p_931200>")                                
a2 = MM("M<y_0e6h*x_1819h*d_734h*p_240545179*l_2*p_2597760*l_1*p_86277665*l_2*p_10707840>")
b1 = MM("M<x_13e6h*d_2c7h>")                                     
b2 = MM("M<y_48h*x_60bh*d_0f13h*p_202245973*l_1*p_1015680*t_2*l_1*p_2027520"
        "*l_1*p_170653255*t_1*l_2*p_2597760*l_1*p_42664523*t_1*l_1*p_107817600>")                                   

## proof that A=<a1,a2> and B=<b1,b2> are isomorphic to Sym_4;
## note that Sym_4 = < g,h | g^2, h^3, (gh)^4 >:
assert a1.order() == 2 and a2.order() == 3 and (a1*a2).order() == 4
assert b1.order() == 2 and b2.order() == 3 and (b1*b2).order() == 4

## proof that D = <m,r> is isomorphic to Dih_8;
## note that Dih_8 = < m,r | m^2, r^4, r^m = r^3 >:
r  = a1 * a2
m  = (a2 * a1)**2
assert r.order() == 4 and m.order() == 2 and r**m  == r**3

## D lies in A by construction; proof that D lies in B:
assert r == b2 * b1 * b2 and m == b1 * b2 * b1 * b2**-1 * b1 * b2

## proof that M = <A,B>; this also shows that A <> B:
## the elements u, v lie in <A,B>; they have orders 71 and 47, respectively,
## so M = < u,v > by the classification of the maximal subgroups of M:
u =  a2**-1 * b2**-1 * a2**-1 * b2**2 * a1 * b1 * b2**-1
v =  b2 * a2 * b2**-1 * a2 * b2**-1 * b1 * a1 * b2
assert u.order() == 71 and v.order() == 47
\end{verbatim}
\caption{Python code for the proof of the Theorem, using \texttt{mmgroup} version 1.0.7.}
\label{fig:gens}
\end{figure}



\begin{thebibliography}{9}

\bibitem{bray}
J.~N.\ Bray.
An improved method for generating the centralizer of an involution.
Arch.\ Math.\ 74:241--245, 2000.


\bibitem{dlp}
H.~Dietrich, M.~Lee, T.~Popiel. The maximal subgroups of the Monster.
Adv.\ Math.\ 469:110214,  2025.

\bibitem{goldschmidt}
D.~M. Goldschmidt.
Automorphisms of trivalent graphs.
Ann.\ of Math.\ 111:377--406, 1980. 
 
\bibitem{parker}
C.\ Parker, P.\ Rowley. Sporadic simple groups which are completions of the Goldschmidt $G_3$-amalgam. J.\ Algebra 235(1):131--153, 2001.


\bibitem{rowley}
P.\ J.\ Rowley, D.\ M.\ Vasey.  
Completions of the Goldschmidt  $G_3$-amalgam and alternating groups.
Comm.\ Algebra 51:1186--1200, 2023.

\bibitem{fast_monster}
M.~Seysen.
A fast implementation of the Monster group.
J.\ Comput.\ Alg.\ 9:100012, 2024.

\bibitem{mmgroup}
M.~Seysen.
The \texttt{mmgroup} package, version 1.0.7.
{\url{https://github.com/Martin-Seysen/mmgroup}}.


\end{thebibliography}
\end{document}